\newtheorem{theorem}{Theorem}[section]
\newtheorem{proposition}[theorem]{Proposition}
\newtheorem{lemma}[theorem]{Lemma}
\newtheorem{conjecture}[theorem]{Conjecture}
\title{\bf The chromatic number of a signed graph}
\author{Edita M\'a\v cajov\'a\\
\small Department of Computer Science\\[-0.8ex]
\small Comenius University\\[-0.8ex]
\small Mlynsk\'a dolina, 842 48 Bratislava, Slovakia\\
\small\tt macajova@dcs.fmph.uniba.sk\\
\and
Andr\'e Raspaud\\
\small LaBRI\\[-0.8ex]
\small Universit\'e de Bordeaux\\[-0.8ex]
\small 33405 Talence Cedex, France\\
\small\tt raspaud@labri.fr\\
\and
Martin \v Skoviera\\
\small Department of Computer Science\\[-0.8ex]
\small Comenius University\\[-0.8ex]
\small Mlynsk\'a dolina, 842 48 Bratislava, Slovakia\\
\small\tt skoviera@dcs.fmph.uniba.sk
}
\begin{document}

\maketitle

\begin{abstract}
In 1982, Zaslavsky introduced the concept of a proper vertex
colouring of a signed graph $G$ as a mapping $\phi\colon
V(G)\to \mathbb{Z}$ such that for any two adjacent vertices $u$
and $v$ the colour $\phi(u)$ is different from the colour
$\sigma(uv)\phi(v)$, where is $\sigma(uv)$ is the sign of the
edge $uv$. The substantial part of Zaslavsky's research
concentrated on polynomial invariants related to signed graph
colourings rather than on the behaviour of colourings of
individual signed graphs. We continue the study of signed graph
colourings by proposing the definition of a chromatic number
for signed graphs which provides a natural extension of the
chromatic number of an unsigned graph. We establish the basic
properties of this invariant, provide bounds in terms of the
chromatic number of the underlying unsigned graph, investigate
the chromatic number of signed planar graphs, and prove an
extension of the celebrated Brooks theorem to signed graphs.

\end{abstract}

\section{Introduction}
This paper continues the study  of vertex colourings of signed
graphs initiated by Zaslavsky in three seminal papers
\cite{Zas-coloring, Zas-invariants, Zas-colorful} from early
1980's. A signed graph is a graph in which each edge is
labelled with $+1$ or $-1$. The idea of how to colour such a
graph is fairly straightforward: (1) use signed colours so as
to enable vertex-switching, and (2) do so in such a way that
the usual rule to colour adjacent vertices with different
colours is respected as long as the edge that joins them is
positive. The following definition, taken from
\cite{Zas-coloring}, incorporates this idea: Given a signed
graph $G$, a \emph{proper vertex colouring} of~$G$, or simply a
\textit{colouring}, is a mapping $\phi\colon V(G)\to\mathbb{Z}$
such that for each edge $e=uv$ of $G$ the colour $\phi(u)$ is
distinct from the colour $\sigma(e)\phi(v)$, where $\sigma(e)$
denotes the sign of~$e$. In other words, the colours of
vertices joined by a positive edge must not coincide while
those joined by a negative edge must not be opposite.

This definition is natural for several reasons extensively
discussed in \cite{Zas-coloring, Zas-invariants, Zas-colorful}.
In particular, colourings defined in this manner are well
behaved under switching. Recall that \textit{switching} of a
signed graph at a vertex $v$ reverses the sign of each non-loop
edge incident with $v$. The switching operation does not
essentially change the signed graph, because it preserves the
sign product on each circuit. If we switch a properly coloured
signed graph at some vertex, the colouring has to be switched
together with the signature, that is to say, the colour at the
vertex must be replaced with its negative. It is easy to see
that the result of a vertex switching is again a proper
colouring.
Furthermore, a colouring of a \textit{balanced}
signed graph, one where the sign product on each simple circuit
is positive, exactly corresponds to the usual vertex colouring
of the underlying unsigned graph.

Our paper focuses on measuring the complexity of a signed graph
colouring by means of a chromatic number. One possible approach
to introducing the chromatic number of a signed graphs was
proposed by Zaslavsky in \cite[p.~218]{Zas-coloring} where a
\textit{signed colouring} of a signed graph $G$ \textit{in $k$
colours}, or \textit{in $2k+1$ signed colours}, is defined to
be a mapping $V(G)\to \{-k, -(k-1), \ldots, 0. \ldots, k-1,
k\}$; a colouring is \textit{zero-free} if it never assumes the
value $0$. The necessity of treating zero-free colourings
separately derives from a different behaviour of switching with
respect to vertices coloured zero. This led Zaslavsky
\cite[p.~219]{Zas-coloring} to define the \textit{chromatic
polynomial} $\chi_G(\lambda)$ of a signed graph $G$ to be the
function defined for odd positive arguments $\lambda=2k+1$
whose value equals the number of proper signed colourings of
$G$ in $k$ colours. The \textit{balanced chromatic polynomial}
$\chi_G^{\mathrm{b}}(\lambda)$, defined for even positive
arguments $\lambda =2k$, is the function which counts the
zero-free proper signed colourings in $k$ colours. Finally, the
\textit{chromatic number} $\gamma (G)$ of $G$, according to
Zaslavsky \cite[p.~290]{Zas-invariants}, is the smallest
nonnegative integer $k$ for which $\chi_G(2k+1)>0$, and the
\textit{strict chromatic number} $\gamma^*(G)$ of $G$ is the
smallest nonnegative integer $k$ such that
$\chi_G^{\mathrm{b}}(2k)>0$.

The disadvantage of Zaslavsky's definitions is that neither of
these two varieties of chromatic number is a direct extension
of the usual chromatic number of an unsigned graph. This is
because, roughly speaking, the chromatic numbers $\gamma$ and
$\gamma^*$ only count the absolute values of colours. However,
it seems natural to require a signed version of any graph
invariant to agree with its original unsigned version on
balanced signed graphs. We therefore diverge from the
definitions adopted by Zaslavsky in \cite{Zas-coloring,
Zas-invariants, Zas-colorful} and propose the following.

First, we define, for each $n\ge 1$, a set
$M_n\subseteq\mathbb{Z}$ by setting $M_n=\{\pm1, \pm2,\ldots,
\pm k\}$ if $n=2k$, and $M_n=\{0,\pm1, \pm2,\ldots, \pm k\}$ if
$n=2k+1$. A proper colouring of a signed graph $G$ that uses
colours from $M_n$ will be called an \textit{$n$-colouring}.
Thus, an $n$-colouring of a signed graph uses at most $n$
distinct colours. Note that if $G$ admits an $n$-colouring,
then it also admits an $m$-colouring for each $m\ge n$.  The
smallest $n$ such that $G$ admits an $n$-colouring will be
called the \textit{chromatic number} of $G$ and will be denoted
by $\chi(G)$. It is easy to see that the chromatic number of a
balanced signed graph coincides with the chromatic number of
its underlying unsigned graph. Moreover,
$$\chi(G)=\gamma(G)+\gamma^*(G).$$

The aim of this paper is to prove several basic results
concerning the chromatic number of a signed graph. Our results
are divided into three sections. In Section~\ref{sec:basic} we
investigate relationships between the chromatic number of a
signed graph and various graph invariants. Among other things,
we present bounds on the chromatic number of a signed graph by
means of the chromatic number, the acyclic chromatic number,
and the arboricity of the underlying unsigned graph. Our main
result is a Brooks-type theorem for signed graphs, which will
be proved in Section~\ref{sec:Brooks}. The theorem states that
the chromatic number of every simple signed graph $G$ is
bounded by $\Delta(G)$, the maximum degree of $G$, unless $G$
is a balanced complete graph, a balanced odd circuit, or an
unbalanced even circuit. Finally, in Section~\ref{sec:planar},
we deal with the chromatic number of planar signed graphs. We
prove that the chromatic number of every simple signed planar
graph is at most $5$ and make a conjecture that this bound can
be improved to $4$.

\section{Basic properties of signed chromatic number}
\label{sec:basic}

We assume that the reader is familiar with the basic concepts
of signed graph theory such as balance, switching, switching
equivalence, etc. For more information about signed graphs we
refer the reader to \cite{Zas-signed} or to \cite{Zas-coloring,
Zas-invariants, Zas-colorful}.

Recall that a colouring of a signed graph $G$ is a function
$\phi\colon V(G)\to\mathbb{Z}$ such that for each edge $e=uv$
of $G$ the colour $\phi(u)$ is distinct from the colour
$\sigma(e)\phi(v)$, where $\sigma(e)$ is the sign of~$e$. It
follows that in order for $G$ to admit a proper colouring, $G$
cannot have a positive loop. Throughout the rest of this paper
we therefore forbid positive loops in all our signed graphs.
Negative loops and parallel edges are not excluded,
nevertheless, a negative loop at a vertex $v$ forbids
$\phi(v)=0$, and a pair of differently signed parallel edges
between vertices $u$ and $v$ forbids $|\phi(u)|=|\phi(v)|$. In
general, colour $0$ has a different behaviour from other
colours because $-0=0$. Somewhat surprisingly, however,
colourings of simple signed graphs behave in a very much
similar way as colourings of unsigned graphs.

Before proceeding further we would like to remark that
throughout the paper the following argument will be used
without mention: if a signed graph resulting from a series of
switchings is $n$-colourable, then so is the original signed
graph.

We start our investigation by comparing the chromatic number of
a signed graph to the chromatic number of its underlying graph.
In order to make a clear distinction between a signed graph and
its underlying graph, we will use $\underline{G}$ to denote the
underlying graph of a signed graph $G$.

\begin{theorem}
For every loopless signed graph $G$ we have $$\chi(G)\le
2\chi(\underline{G})-1.$$ Furthermore, this bound is sharp.
\end{theorem}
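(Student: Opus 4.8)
The plan is to establish the upper bound by converting an optimal proper colouring of the underlying graph into a signed colouring that uses only nonnegative colours, and then to prove sharpness by exhibiting a signed multigraph in which every pair of adjacent vertices is forced to receive colours of distinct absolute value.

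For the upper bound, let $c=\chi(\underline{G})$ and fix a proper colouring $\psi\colon V(\underline{G})\to\{1,2,\ldots,c\}$. I would define $\phi(v)=\psi(v)-1$, so that $\phi$ takes values in the set $\{0,1,\ldots,c-1\}$ of nonnegative integers. To verify that $\phi$ is a proper colouring of $G$, consider an edge $e=uv$. If $e$ is positive, properness of $\psi$ gives $\phi(u)\neq\phi(v)$. If $e$ is negative, I must check that $\phi(u)\neq-\phi(v)$; since both colours are nonnegative, an equality would force $\phi(u)=\phi(v)=0$, that is $\psi(u)=\psi(v)=1$, contradicting the properness of $\psi$. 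Hence $\phi$ is proper, and since $\{0,1,\ldots,c-1\}\subseteq M_{2c-1}$, the map $\phi$ is a $(2c-1)$-colouring, giving $\chi(G)\le 2c-1$.

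For sharpness, for each $c\ge 1$ I would build $G_c$ from the complete graph $K_c$ by replacing every edge with a pair of parallel edges of opposite sign. As recorded in the preliminary remarks, such a pair between $u$ and $v$ forbids $|\phi(u)|=|\phi(v)|$, so in any proper colouring of $G_c$ all $c$ vertices must receive colours of pairwise distinct absolute value. The number of distinct absolute values available in $M_n$ is $k$ when $n=2k$ and $k+1$ when $n=2k+1$; hence accommodating $c$ distinct absolute values requires $n\ge 2c-1$, which forces $\chi(G_c)\ge 2c-1$. Since the underlying graph of $G_c$ is $K_c$ (with doubled edges), we have $\chi(\underline{G_c})=c$, and combining with the upper bound yields $\chi(G_c)=2c-1=2\chi(\underline{G_c})-1$.

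I expect the main obstacle to be locating the right sharpness example rather than any computation: the upper bound follows almost immediately once one notices that a colouring by nonnegative integers automatically respects negative edges, because only the colour $0$ is its own negative. The delicate point in the sharpness argument is that distinct absolute values must be forced on \emph{every} pair, which a single edge of either sign cannot achieve---a positive edge only forbids equal colours and a negative edge only forbids opposite colours---so one genuinely needs the differently-signed parallel pair, after which the count of available absolute values in $M_n$ finishes the proof.
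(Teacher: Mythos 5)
Your upper bound argument is correct and is essentially identical to the paper's: both proofs observe that a proper colouring of $\underline{G}$ by the nonnegative integers $\{0,1,\ldots,c-1\}$ is automatically a proper signed colouring, since for a negative edge the equality $\phi(u)=-\phi(v)$ with both values nonnegative would force $\phi(u)=\phi(v)=0$, already excluded by properness. Your sharpness construction, however, takes a genuinely different route. You double every edge of $K_c$ into a pair of oppositely signed parallel edges, so that every pair of adjacent vertices is forced to carry colours of distinct absolute value, and then a count of absolute values in $M_n$ finishes the argument in two lines. This is legitimate for the theorem as stated, because the paper explicitly admits parallel edges in its signed graphs and the theorem only assumes looplessness; and your verification (positive edge forbids equal colours, negative edge forbids opposite colours, so together they forbid equal absolute values) is exactly the observation recorded in the paper's preliminaries. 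What your shortcut does not buy is sharpness for \emph{simple} signed graphs: the paper's construction---one all-positive copy of $K_n$ together with $n-1$ all-negative copies, joined by positive edges between non-corresponding vertices---is considerably more elaborate precisely because it produces simple examples, at the cost of a pigeonhole argument showing each all-negative copy must repeat a colour and that these repeated colours must be pairwise distinct. So your proof is correct and shorter, but it establishes a weaker form of sharpness than the paper's (multigraphs rather than simple graphs); anyone wanting the bound to be tight within the class of simple signed graphs would still need something like the paper's construction.
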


\begin{proof}
Every colouring of $\underline{G}$ with colours in the set
$\{0, 1,\ldots, n-1\}$ is also a signed colouring of $G$
irrespectively of the signature. For $n=\chi(\underline{G})$ we
get a signed colouring of $G$ with $2\chi(\underline{G})-1$
colours from the set $\{0,\pm 1, \ldots, \pm (n-1)\}=M_{2n-1}$,
implying the required inequality.

For the second part, we display an infinite sequence
$(G_n)_{n\ge 1}$ of signed graphs such that $\chi(G)=
2\chi(\underline{G})-1$. To construct $G_n$, we take one
all-positive copy $H_1$ of $K_n$ and $n-1$ all-negative copies
$H_2, H_3, \ldots, H_n$ of $K_n$. For $1\le i\le n$ we denote
the vertices of $H_i$ by $v_{i,1},v_{i,2},\ldots,v_{i,n}$ and
call any two vertices $v_{i,k}$ and $v_{j,k}$ from different
subgraphs $H_i$ and $H_j$ \emph{corresponding}. To finish the
construction, we insert a positive edge between any pair of
non-corresponding vertices from different copies of $K_n$.

Observe that the assignment $\phi(v_{i,j})=j$ for each
$i,j\in\{1,2,\ldots, n\}$ defines a proper colouring of
$\underline{G_n}$. This colouring is optimal since
$\underline{G_n}$ contains a copy of~$K_n$. Hence
$\chi(\underline{G_n})=n$, and by the first part of the proof,
$\chi(G_n)\le 2n-1$. We now show that the chromatic number of
$G_n$ is $2n-1$. Assume, to the contrary, that $G_n$ has a
colouring with colours from the set $M_{2n-2}$. Since $H_1$ is
a balanced complete graph on $n$ vertices, $n$ different
colours $c_1, c_2, \ldots, c_n$ have to be used for the
vertices of $H_1$. We may assume that the vertex $v_{1,i}$ has
colour~$c_i$. Let $D=\{d_1,d_2,\ldots,d_{n-2}\}$ be the set of
all remaining colours, so that
$M_{2n-2}=\{c_1,c_2,\ldots,c_n,\penalty0
d_1,d_2,\ldots,d_{n-2}\}$.

There are $n$ vertices in each $H_i$ with $i\ge 2$ but only
$n-1$ distinct absolute values in $M_{2n-2}$, so there must be
two vertices in each such $H_i$ that receive colours with the
same absolute value. Since the edges within each $H_i$ with
$i\ge 2$ are all negative, such a pair of vertices must receive
the same colour. Further, any vertex $v_{i,j}$ of $H_i$ with
$i\ge 2$ receives a colour from the set $\{c_j\}\cup D$. It
follows that the repeated colour in each $H_i$ is contained in
$D$. If a colour $d\in D$ is repeated in some $H_i$, then it
does not occur at all in any other subgraph $H_j$ with $j\ge
2$: indeed, a vertex $v$ in $H_j$ with colour $d$ would be
joined to at least one of the vertices coloured $d$ in $H_i$ by
a positive edge, which is impossible. Thus each $H_i$ with
$i\ge 2$ has a different repeated colour from $D$. However,
there are $n-1$ all-negative subgraphs $H_i$ in $G_n$ while
there are only $n-2$ elements in $D$, proving that a colouring
of $G_n$ with colours from $M_{2n-2}$ is not feasible.
\end{proof}

A signed graph is \textit{antibalanced} if the sign product on
every even circuit is $+1$ and the sign product on every odd
circuit is $-1$. The signature of an antibalanced signed graph
is easily seen to be switching equivalent to the all-negative
signature. Furthermore, it is well known \cite{Harary} that a
signed graph is antibalanced if and only if its vertex set can
be partitioned into two sets (either of which may be empty)
such that each edge between the sets is positive and each edge
within a set is negative. A balanced antibalanced signed graph
is necessarily bipartite, so antibalanced signed graphs are a
natural generalisation of bipartite graphs. The next result is
an obvious extension of the familiar characterisation of
bipartite graphs.

\begin{proposition}\label{prop:antibal}
A signed graph is $2$-colourable (that is, $\chi(G)\le2$) if
and only if it is antibalanced.
\end{proposition}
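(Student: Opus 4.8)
The plan is to exhibit a direct dictionary between proper $2$-colourings and the bipartitions furnished by Harary's characterisation of antibalanced signed graphs, which has already been recalled above. The central observation is that a $2$-colouring is a map $\phi\colon V(G)\to\{+1,-1\}=M_2$, and since every value $\phi(v)$ is a unit, for each edge $e=uv$ the properness condition $\phi(u)\neq\sigma(e)\phi(v)$ can be rewritten by multiplying through by $\phi(v)$ (using $\phi(v)^2=1$) as $\phi(u)\phi(v)\neq\sigma(e)$. As both sides lie in $\{+1,-1\}$, this is equivalent to the single identity $\sigma(e)=-\phi(u)\phi(v)$ holding for every edge $e=uv$.

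For the forward implication I would assume $G$ admits a proper $2$-colouring $\phi$ and set $A=\phi^{-1}(+1)$ and $B=\phi^{-1}(-1)$. The identity above then reads: $e$ is positive exactly when $\phi(u)\neq\phi(v)$, i.e.\ when $e$ joins $A$ to $B$, and $e$ is negative exactly when $\phi(u)=\phi(v)$, i.e.\ when $e$ lies inside $A$ or inside $B$. This is precisely the partition condition in Harary's characterisation, so $G$ is antibalanced.

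For the converse I would start from an antibalanced $G$, invoke Harary's characterisation to obtain a partition $V(G)=A\cup B$ with all within-part edges negative and all between-part edges positive, and define $\phi$ to be $+1$ on $A$ and $-1$ on $B$. Reading the same identity $\sigma(e)=-\phi(u)\phi(v)$ in the reverse direction shows that $\phi$ is a proper colouring using only the colours of $M_2$, whence $\chi(G)\le 2$.

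There is no serious obstacle here; the argument is essentially an unwinding of definitions once the identity $\sigma(e)=-\phi(u)\phi(v)$ is isolated. The only point deserving a moment of care is the treatment of the permitted negative loops: at a negative loop on $v$ properness demands $\phi(v)\neq-\phi(v)$, which holds automatically because $\phi(v)\in\{\pm1\}$ is never $0$, so loops are consistent with the correspondence and require no separate handling (positive loops are excluded throughout the paper, matching the fact that antibalanced graphs have no positive loops).
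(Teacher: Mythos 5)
Your proof is correct and takes essentially the same route as the paper: the direction from $2$-colourability to antibalance is identical (the colour classes give exactly Harary's bipartition), and your converse via the Harary partition with colours $\pm1$ is the paper's switching argument in disguise, since switching at one part of that partition makes the signature all-negative and your colouring becomes the paper's constant colouring by $1$. Your explicit identity $\sigma(e)=-\phi(u)\phi(v)$ and the remark on negative loops are sound but add nothing beyond an unwinding of the same definitions.
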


\begin{proof}
Let $G$ be a signed graph. If $G$ is antibalanced, we can
switch the signature of $G$ to make it all-negative and assign
$1$ to all the vertices. This produces a $2$-colouring of $G$,
so $\chi(G)\le2$. For the converse assume that $G$ admits a
$2$-colouring. The colour classes form a partition into sets
$V_1$ and $V_{-1}$ which has the property that each edge within
the sets is negative and each edge between the sets is
positive. By the result of \cite{Harary} mentioned above, $G$
is antibalanced.
\end{proof}

A graph $H$ is called \textit{$k$-degenerate} if every subgraph
of $H$ has a vertex of degree at most~$k$.  It is well known
and easy to see that a graph is $k$-degenerate if and only if
there is an ordering $v_1, v_2,\ldots, v_n$ of its vertices
such that for every $1<i\le n$ the vertex $v_i$ has at most $k$
neighbours in $\{v_1,\ldots, v_{i-1}\}$.

\begin{proposition}\label{prop:k-degen}
If a signed graph $G$ is $k$-degenerate, then $\chi(G)\leq
k+1$.
\end{proposition}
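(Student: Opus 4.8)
The plan is to use the degeneracy ordering together with a greedy colouring argument, mirroring the classical proof that $k$-degenerate unsigned graphs are $(k+1)$-colourable. Since $G$ is $k$-degenerate, there is an ordering $v_1, v_2, \ldots, v_n$ of $V(G)$ such that each $v_i$ with $i>1$ has at most $k$ neighbours among its predecessors $v_1, \ldots, v_{i-1}$. I want to colour the vertices one at a time in this order using colours from $M_{k+1}$, which has exactly $k+1$ elements, so at each step I must show that some admissible colour remains available.

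First I would fix the palette $M_{k+1}$ and process the vertices in the degeneracy order. When I reach $v_i$, the only constraints come from its already-coloured neighbours, of which there are at most $k$. The key observation is that each already-coloured neighbour $u$ forbids \emph{exactly one} value for $\phi(v_i)$: if the edge $uv_i$ is positive, the forbidden value is $\phi(u)$, and if it is negative, the forbidden value is $-\phi(u)$. In either case a single neighbour rules out at most one colour in $M_{k+1}$. Hence the at most $k$ coloured predecessors forbid at most $k$ of the $k+1$ available colours, leaving at least one admissible colour for $v_i$. Assigning that colour to $v_i$ and continuing yields a proper $(k+1)$-colouring of $G$, so $\chi(G) \le k+1$.

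The one point requiring a little care is the colour $0$, whose anomalous behaviour ($-0=0$) is flagged earlier in the paper. When $\phi(u)=0$, a positive edge forbids $\phi(v_i)=0$ and a negative edge forbids $\phi(v_i)=-0=0$; either way the neighbour forbids only the single value $0$, so the ``one neighbour forbids one colour'' count is unaffected. A second subtlety is whether parallel edges or negative loops could force more constraints, but the definition of $k$-degeneracy applies to the underlying structure and a negative loop at $v_i$ merely forbids $\phi(v_i)=0$, which again eliminates at most one colour; since we may assume $G$ is simple for the purposes of the chromatic bound (positive loops are already forbidden), this causes no difficulty. I do not expect any serious obstacle here: the whole argument is a direct signed analogue of the standard greedy bound, and the main thing to verify is precisely the counting claim that each predecessor excludes at most one colour.
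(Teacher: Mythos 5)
Your proof is correct and takes essentially the same approach as the paper: a greedy colouring along the degeneracy ordering, using the key count that each previously coloured neighbour forbids exactly one colour from $M_{k+1}$ (its own colour for a positive edge, its negative for a negative edge). The extra remarks about colour $0$ and loops are fine but not needed beyond what the paper already establishes.
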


\begin{proof}
Order the vertices of $G$ into a sequence $v_1,v_2, \ldots,v_n$
in such a way that for each $i$ with $1<i\le n$ the vertex
$v_i$ has at most $k$ neighbours within $\{v_1,v_2,
\ldots,v_{i-1}\}$. Now, colour the vertices \textit{greedily}
in the described order, that is, at each step assign the vertex
$v_i$ an available colour with the smallest absolute value.
Each coloured neighbour $v$ of $v_i$ forbids one colour to
$v_i$: the colour of $v$ if the edge $vv_i$ is positive, or the
negative of this colour if the edge $vv_i$ is negative. Hence,
having $k+1$ colours from $M_{k+1}$ at our disposal, we can
colour each $v_i$  and produce a proper $(k+1)$-colouring
of~$G$.
\end{proof}

For the next result we need to assume that the signed graph in
question is simple. Recall that the \textit{vertex arboricity}
of a graph $H$, denoted by $a(H)$, is the minimum number of
subsets into which $V(H)$ can be partitioned so that each set
induces a forest. Similarly, the \textit{edge arboricity} of a
graph $H$, denoted by $a'(H)$, is the minimum number of forests
into which $E(H)$ can be partitioned; equivalently, it is the
minimum number of acyclic subgraphs whose union covers $E(H)$.

\begin{proposition}\label{prop:basic2}
Let $G$ be a simple signed graph. The following statements hold
true.
\begin{itemize}
\item[{\rm (i)}] If $G$ is $K_4$-minor-free, then
    $\chi(G)\leq 3$.
\item[{\rm (ii)}] If $G$ is a union of two forests (that
    is, if $a'(\underline{G})\le 2$), then $\chi(G)\leq 4$.
\item[{\rm (iii)}] If $a(G)\le k$, then $\chi(G)\leq 2k$.
\end{itemize}
\end{proposition}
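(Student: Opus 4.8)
The plan is to reduce each of the three parts to one of the two colouring tools already in hand: Proposition~\ref{prop:k-degen} (a $k$-degenerate signed graph is $(k+1)$-colourable) and Proposition~\ref{prop:antibal} (an antibalanced signed graph is $2$-colourable). Parts~(i) and~(ii) I would obtain by exhibiting a degeneracy bound for the underlying graph $\underline{G}$ and invoking Proposition~\ref{prop:k-degen}; part~(iii) I would handle by splitting $\underline{G}$ into forests and gluing together suitably rescaled $2$-colourings.

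For part~(i) I would use the well-known structural fact that every simple $K_4$-minor-free graph is $2$-degenerate (such graphs have treewidth at most~$2$, and therefore every one of their subgraphs has a vertex of degree at most~$2$). Applying Proposition~\ref{prop:k-degen} with $k=2$ then yields $\chi(G)\le 3$. For part~(ii), if $\underline{G}$ is the union of two forests then so is every subgraph $H$, since restricting each forest to $H$ again gives a forest; hence $|E(H)|\le 2(|V(H)|-1)$. Summing degrees gives $2|E(H)|<4|V(H)|$, so every subgraph has a vertex of degree at most~$3$. Thus $\underline{G}$ is $3$-degenerate and Proposition~\ref{prop:k-degen} with $k=3$ gives $\chi(G)\le 4$.

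For part~(iii) I would first fix a partition $V(G)=V_1\cup\cdots\cup V_k$ with each induced subgraph $G[V_i]$ a forest, which exists because $a(G)\le k$. Each $G[V_i]$ contains no circuit and is therefore antibalanced, so by Proposition~\ref{prop:antibal} it admits a colouring with colours in $\{+1,-1\}$. Multiplying the colouring of the $i$th part by the integer $i$ produces a proper colouring of $G[V_i]$ with colours in $\{+i,-i\}$: properness is preserved because for every edge $uv$ the condition $\phi(u)\ne\sigma(uv)\phi(v)$ is invariant under scaling $\phi$ by a nonzero constant. Assembling these into a single map $\phi\colon V(G)\to M_{2k}$, where $M_{2k}=\{\pm1,\dots,\pm k\}$, I note that edges inside a part are already handled, while for an edge $uv$ with $u\in V_i$, $v\in V_j$ and $i\ne j$ we have $|\phi(u)|=i\ne j=|\sigma(uv)\phi(v)|$, so $\phi(u)\ne\sigma(uv)\phi(v)$ automatically. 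Hence $\phi$ is a proper $2k$-colouring and $\chi(G)\le 2k$.

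The genuinely delicate point lies in part~(i). A crude count shows only that a $K_4$-minor-free graph, having at most $2n-3$ edges, has average degree below~$4$ and hence a vertex of degree at most~$3$; this yields mere $3$-degeneracy and the weaker bound $\chi(G)\le 4$. To reach $\chi(G)\le 3$ one must invoke the stronger structural $2$-degeneracy of $K_4$-minor-free graphs rather than an averaging argument. In part~(iii) the only things to verify are the scale-invariance of properness and the conflict-freeness of inter-part edges via distinct absolute values, both of which are immediate, so I expect the real work to be in marshalling the structural facts for~(i) rather than in any intricate computation.
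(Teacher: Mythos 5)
Your proof is correct, but it diverges from the paper's in an interesting way on parts (ii) and (iii). For (i) you and the paper do essentially the same thing: establish that simple $K_4$-minor-free graphs are $2$-degenerate (the paper via the series-parallel characterisation, you via treewidth) and invoke Proposition~\ref{prop:k-degen}. For (ii) the paper does not use degeneracy at all: it takes edge-disjoint spanning forests $F_1,F_2$ with $F_1\cup F_2=G$, switches the signature so that $F_1$ becomes all-negative, and then properly $2$-colours $F_2$ with the colours $1,2$; since both colours are positive, no edge of the all-negative $F_1$ can join opposite colours, so the result is a proper colouring of $G$ using only the two colours $1$ and $2$ inside $M_4$. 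Your alternative --- every subgraph $H$ of a union of two forests satisfies $|E(H)|\le 2(|V(H)|-1)$, hence $\underline{G}$ is $3$-degenerate, hence Proposition~\ref{prop:k-degen} gives $\chi(G)\le 4$ --- is shorter and uniform with (i), while the paper's switching argument is the template it then reuses for (iii) and for Proposition~\ref{prop:acyclic}, and it yields the slightly stronger conclusion that two (positive) colours suffice. For (iii) your idea coincides with the paper's --- one absolute value per forest, inter-part edges resolved automatically by distinct absolute values --- but you avoid switching $G$: you note each induced forest is vacuously antibalanced, extract a $\{\pm1\}$-colouring from Proposition~\ref{prop:antibal}, and scale it by $i$, whereas the paper switches each forest to all-negative and assigns the constant colour $i$ to its vertices. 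The two colourings differ exactly by a switching, so the difference is cosmetic; if anything, your version sidesteps the paper's (easy but necessary) remark that switchings performed inside distinct forests do not interfere with one another.
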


\begin{proof}
(i) A graph is $K_4$-minor-free if and only if it is
series-parallel \cite[pp.~172–-174]{BLS}. A series-parallel
graph may be turned into $K_2$ by a sequence two operations:
(1) replacement of a pair of parallel edges with a single edge
that connects their common end-vertices, and (2) replacement of
a pair of edges incident with a vertex of degree $2$ with a
single edge. From this description it is easy to see that every
simple series-parallel graph is $2$-degenerate. The result now
follows from Proposition~\ref{prop:k-degen}.

(ii) Let $F_1$ and $F_2$ be forests whose union is $G$. Without
loss of generality we may assume that the forests are spanning
and edge-disjoint. To colour $G$, first switch the signature of
$G$ to make $F_1$ all-negative. Then assign each vertex of $G$
colour $1$ or $2$ in such a way that adjacent vertices of $F_2$
do not receive the same colour. The end-vertices of each edge
in $F_2$ are now properly coloured. The same is, true for $F_1$
because each edge in $F_1$ is negative and its end-vertices do
not receive opposite colours. Thus we have a proper
$4$-colouring of $G$.

(iii) Let $\{V_1,V_2,\ldots, V_k\}$ be a partition of $V(G)$
such that the subgraph $F_i$ induced by each $V_i$ is a forest.
Take an arbitrary forest $F_i$ and switch vertices within $F_i$
to make it all-negative. Since the forests $F_1, F_2, \ldots,
F_k$ are pairwise disjoint, switching in one forest does not
interfere with switching in other forests. Thus if we assign
colour $i$ to each vertex of $F_i$, we obtain a proper
$2k$-colouring of $\bigcup_{i=1}^k F_i$. Observe that each edge
of $G$ joining different forests receives colours with
different absolute values, so this colouring is also a proper
$2k$-colouring of the entire $G$. This completes the proof.
\end{proof}

The idea from Part (iii) of the previous proof can be slightly
improved to prove the following.

\begin{proposition}\label{prop:acyclic}
For every simple signed graph $G$ we have
$$\chi(G)\le\chi_a(\underline{G})$$
where $\chi_a(\underline{G})$ denotes the acyclic chromatic
number of $\underline{G}$.
\end{proposition}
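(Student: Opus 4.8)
The plan is to start from an optimal acyclic colouring of $\underline{G}$ and convert it into a signed colouring, exploiting the fact that the union of any two colour classes induces a forest and that forests, being balanced, can be switched freely. Write $m=\chi_a(\underline{G})$ and let $A_1,A_2,\ldots,A_m$ be the colour classes of an acyclic colouring of $\underline{G}$ with $m$ colours. Each $A_i$ is independent, and for all $i\ne j$ the subgraph induced by $A_i\cup A_j$ is a forest. I would pair the classes as $(A_1,A_2),(A_3,A_4),\ldots$, leaving $A_m$ unpaired when $m$ is odd. By acyclicity, the union $F_\ell=A_{2\ell-1}\cup A_{2\ell}$ of the $\ell$-th pair induces a forest, and since distinct pairs involve disjoint classes, the forests $F_1,F_2,\ldots,F_{\lfloor m/2\rfloor}$ are pairwise vertex-disjoint.

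The construction then mimics Part~(iii) of Proposition~\ref{prop:basic2}, but squeezes two signed colours out of each forest rather than one. Because each $F_\ell$ is a forest it is balanced, so I would switch it to make all of its edges positive; the vertex-disjointness of the forests guarantees that these switchings do not interfere with one another, exactly as in the proof of Part~(iii). After switching, I assign colour $\ell$ to every vertex of $A_{2\ell-1}$ and colour $-\ell$ to every vertex of $A_{2\ell}$; if $m$ is odd, I assign colour $0$ to the leftover class $A_m$. The colours used all lie in $M_m$: when $m=2k$ they are $\{\pm1,\ldots,\pm k\}=M_m$, and when $m=2k+1$ they are $\{0,\pm1,\ldots,\pm k\}=M_m$, so a valid $m$-colouring is what we obtain.

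It remains to verify properness. An edge inside a forest $F_\ell$ joins $A_{2\ell-1}$ to $A_{2\ell}$ (both classes being independent), so after switching it is a positive edge between colours $\ell$ and $-\ell$, and $\ell\ne-\ell$ makes it proper. An edge between two different forests $F_\ell$ and $F_{\ell'}$ joins colours of absolute values $\ell\ne\ell'$; since switching the sign of an edge cannot equate two colours of different absolute value, such an edge is proper regardless of its final sign---this is precisely the absolute-value argument already used in Part~(iii). Finally, an edge meeting the zero class joins $0$ to a nonzero colour and is therefore proper as well. Hence $G$ admits an $m$-colouring and $\chi(G)\le m=\chi_a(\underline{G})$. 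The one genuinely delicate point is the compatibility of the switchings, which is exactly what the acyclicity hypothesis buys us: it lets us assemble the classes into \emph{vertex-disjoint} forests, so independent switching is legitimate; the parity of $m$ is then handled cleanly by reserving the colour $0$ for the unpaired class.
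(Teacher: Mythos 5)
Your proof is correct and takes essentially the same route as the paper: pair the acyclic colour classes into vertex-disjoint forests, switch within each forest (disjointness making the switchings independent), separate distinct forests by absolute value, and give the leftover class colour $0$ when $m$ is odd. The only difference is cosmetic --- the paper switches each forest to all-negative and repeats a single colour on it (invoking Proposition~\ref{prop:basic2}(iii) via vertex arboricity), whereas you switch to all-positive and use $\pm\ell$ on the two classes; these two colourings are related by switching at one of the two classes of each forest, so they coincide up to switching.
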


\begin{proof}
By the definition, the colour classes of an acyclic colouring
are disjoint independent sets of vertices and every pair of
colour classes induces a forest. To prove the result, first
assume that $\chi_a(\underline{G})$ is even, say
$\chi_a(\underline{G})=2k$. Choose an arbitrary acyclic
colouring of~$\underline{G}$ with $2k$ colours from the set
$\{1,2,\ldots, 2k\}$. If we arrange the colours into pairs
$\{2i-1,2i\}$ for $i\in\{1,2,\ldots, k\}$, then we obtain a
partition of $V(G)$ into $k$ sets each inducing a forest. Hence
$a(\underline{G})\le k$, and from
Proposition~\ref{prop:basic2}~(iii) we get $\chi(G)\le
2k=\chi_a(\underline{G})$. To finish the proof assume that
$\chi_a(\underline{G})$ is odd, say
$\chi_a(\underline{G})=2k+1$. In this case we proceed similarly
except that we use one more colour $2k+1$, and recolour the
vertices receiving $2k+1$ from the acyclic colouring with
colour $0$. It is easy to see the result is a proper
$(2k+1)$-colouring of~$G$, again implying that $\chi(G)\le
2k+1=\chi_a(\underline{G})$.
\end{proof}

\section{A Brooks theorem for signed graphs}\label{sec:Brooks}

The purpose of this section is to state and prove a signed
graph version of one of the most fundamental results on graph
colourings, the famous theorem of Brooks \cite{Brooks}. Let $G$
be a signed graph with maximum degree $\Delta$. If we colour
$G$ greedily with respect to any ordering of its vertices,
then, for each vertex in turn at most $\Delta$ colours are
forbidden by its previously coloured neighbours. Therefore
$\chi(G)\le\Delta+1$ just as in the unsigned case. The bound is
clearly reached whenever $G$ is a balanced complete graph or a
balanced odd circuit. Indeed, these two families are familiar
from the classical Brooks theorem as the only connected graphs
whose chromatic number reaches the bound $\Delta+1$. In the
signed case there is one additional extremal class: even
unbalanced circuits. Our main result shows that, among
connected simple signed graphs, these three infinite families
provide the only signed graphs whose chromatic number reaches
the bound $\Delta$+1.

\begin{theorem}\label{thm:Brooks}
Let $G$ be a simple connected signed graph. If $G$ is not a
balanced complete graph, a balanced odd circuit, or an
unbalanced even circuit, then $\chi(G)\le\Delta(G)$.
\end{theorem}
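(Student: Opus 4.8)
The plan is to prove the bound by greedy colouring with the $\Delta(G)$ colours of $M_{\Delta(G)}$, choosing the vertex order so that at most one vertex ever sees all of its neighbours coloured with pairwise distinct forbidden values. Write $\Delta=\Delta(G)$ and recall that $|M_n|=n$, so a greedy pass over $M_\Delta$ colours any vertex having at most $\Delta-1$ earlier coloured neighbours, since each such neighbour forbids a single value (namely $\sigma(e)$ times its colour). The one idea beyond the unsigned argument is the observation that two neighbours $a,b$ of a vertex $v$ forbid \emph{the same} value at $v$ precisely when $\sigma(va)\phi(a)=\sigma(vb)\phi(b)$, equivalently $\phi(a)=\sigma(va)\sigma(vb)\phi(b)$; if we can force this, then $v$ sees at most $\Delta-1$ distinct forbidden values and can still be coloured. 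I would first dispose of $\Delta\le 2$: a connected graph with $\Delta\le 2$ is a path or a circuit, and a circuit that is neither a balanced odd circuit nor an unbalanced even circuit is antibalanced, hence $2$-colourable by Proposition~\ref{prop:antibal}; paths are trivial.

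Next I would reduce to the $\Delta$-regular, $2$-connected case. If $G$ is not $\Delta$-regular, pick a vertex $v_0$ of degree $<\Delta$, take a spanning tree rooted at $v_0$, and order the vertices by nonincreasing distance from $v_0$ so that $v_0$ is last; every other vertex then has its tree-neighbour towards $v_0$ appearing later, hence at most $\Delta-1$ earlier neighbours, and $v_0$ itself has fewer than $\Delta$ neighbours, so greedy over $M_\Delta$ succeeds. If $G$ is $\Delta$-regular but has a cut vertex, then the cut vertices of $G$ lying in any block have degree less than $\Delta$ within that block, so each block is $\Delta$-colourable by the previous argument; I would then glue the block colourings along the block-cut tree, applying to each block a signed permutation of $M_\Delta$ (a map $c\mapsto\pm\pi(|c|)$ fixing $0$, which preserves properness) so that the shared cut vertices receive matching colours.

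The heart of the proof is the case that $G$ is $\Delta$-regular, $2$-connected, and $\Delta\ge 3$; here I would distinguish according to whether $\underline{G}$ is complete. If $\underline{G}$ is \emph{not} complete, the classical structural lemma underlying Brooks' theorem supplies a vertex $v$ with two non-adjacent neighbours $a,b$ such that $G-\{a,b\}$ is connected. I would colour $a$ and $b$ first, freely (they are non-adjacent), setting $\phi(a)=1$ and $\phi(b)=\sigma(va)\sigma(vb)$ so that $\sigma(va)\phi(a)=\sigma(vb)\phi(b)$, then order $G-\{a,b\}$ along a spanning tree rooted at $v$ with $v$ last and colour greedily: every internal vertex has a later neighbour and so at most $\Delta-1$ earlier ones, while at $v$ the neighbours $a,b$ contribute a single forbidden value, leaving a colour free. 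If instead $\underline{G}$ is complete, then $G$ is a complete graph, and since it is not a \emph{balanced} complete graph it is unbalanced, whence it contains a negative triangle $\{v,a,b\}$ (a shortest negative circuit in a complete graph must be a triangle). Because this triangle is unbalanced one has $\sigma(va)\sigma(vb)=-\sigma(ab)$, so the two requirements $\phi(a)=\sigma(va)\sigma(vb)\phi(b)$ (forcing $a,b$ to forbid the same value at $v$) and $\phi(a)\ne\sigma(ab)\phi(b)$ (respecting the edge $ab$) are simultaneously satisfiable by any nonzero choice, for example $\phi(b)=1$ and $\phi(a)=-\sigma(ab)$; ordering the remaining vertices towards $v$ and colouring greedily then finishes exactly as before.

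I expect the main obstacle to be precisely this complete-underlying-graph case: when $\underline{G}=K_n$ there are no two non-adjacent neighbours, so the classical device of colouring two non-adjacent neighbours identically is unavailable, and one must instead exploit the sign structure through a negative triangle, which is the genuinely new ingredient of the signed setting. A secondary technical point to watch is the behaviour of the colour $0$ (present in $M_\Delta$ only for odd $\Delta$) during the block-gluing reconciliation; this is handled by choosing block colourings in which cut vertices avoid $0$, or equivalently by noting that a signed permutation of $M_\Delta$ can match any two nonzero colours.
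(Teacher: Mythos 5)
Your $2$-connected case is essentially the paper's own Case~1: the same Lemma~\ref{lemma:2vertices} of Lov\'asz, the same device of making the two distance-two neighbours $a,b$ forbid a single value at their common neighbour, and the same greedy ordering of Lemma~\ref{lemma:order}. Your treatment of unbalanced complete graphs is correct and is a genuinely different (and arguably slicker) route than the paper's, which proves a separate Proposition~\ref{prop:complete} by induction on the order; your observation that a shortest negative circuit in a complete signed graph is a triangle, and that its sign relation $\sigma(va)\sigma(vb)=-\sigma(ab)$ makes the two constraints at $a,b$ simultaneously satisfiable, lets the complete case run through the same greedy skeleton as the non-complete one.

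The genuine gap is the cut-vertex case, which you dispose of by gluing block colourings with signed permutations of $M_\Delta$ and which you declare to be ``handled by choosing block colourings in which cut vertices avoid $0$''. Such colourings need not exist. Take $\Delta=3$ and let one block of $G$ be a balanced (all-positive) triangle $abc$ all three of whose vertices are cut vertices of $G$: for instance, attach to each of $a,b,c$ a bridge leading to the degree-two vertex of an all-positive $K_4$ with one subdivided edge; the resulting $G$ is simple, connected, $3$-regular, has cut vertices, and is none of the exceptions. Every proper colouring of the triangle block with colours in $M_3=\{0,\pm1\}$ must use all three values, so one of the cut vertices $a,b,c$ receives $0$, and no signed permutation can repair this since every signed permutation fixes $0$ (as you note yourself). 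The same phenomenon occurs for any block that is a balanced $K_\Delta$ with $\Delta$ odd. Once a cut vertex is stuck with colour $0$, the other blocks containing it must realize $0$ at that very vertex, and whether they can is exactly the delicate point: colour $0$ can also be \emph{excluded} at a vertex by a block (in an unbalanced $K_4$ with one negative edge, both ends of that edge are nonzero in every $3$-colouring), so one must prove that these two kinds of forcing never collide at a cut vertex of a $\Delta$-regular graph. That proof is precisely the paper's Case~2: it shows (property (*)) that in a minimal counterexample the only possible obstruction is a bridge $vw$ with $\deg(v)=\deg(w)=\Delta$ such that every $\Delta$-colouring of either side of $G-vw$ forces colour $0$ at its end, and it then refutes this configuration by choosing the bridge so that the side $G_v$ is bridgeless and exhibiting a colouring of $G_v$ with $v$ nonzero --- colour a neighbour $u$ of $v$ with $0$ first and proceed greedily towards $v$, so that the edge $uv$ expels $v$ from colour $0$. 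Your sketch contains no counterpart of this argument, and without it the gluing step does not go through.
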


Before proving Theorem~\ref{thm:Brooks} we need several
preparatory results. The first of them concerns the chromatic
numbers of signed complete graphs.

\begin{proposition} \label{prop:complete}
If $G$ is a signed complete graph on $n$ vertices, then
$\chi(G)\le n$. Furthermore, $\chi(G)= n$ if and only if $G$ is
balanced.
\end{proposition}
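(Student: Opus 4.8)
The plan is to separate the statement into the easy upper bound $\chi(G)\le n$ and the characterisation of equality, the latter reducing to showing that an \emph{unbalanced} signed complete graph can be coloured with only $n-1$ colours. First, since $\underline{G}=K_n$ we have $\Delta(G)=n-1$, so the greedy bound recorded at the start of this section gives $\chi(G)\le\Delta(G)+1=n$. For the implication ``balanced $\Rightarrow\chi(G)=n$'', I would use the fact, noted in the introduction, that the chromatic number of a balanced signed graph equals that of its underlying graph; since $\chi(\underline{G})=\chi(K_n)=n$, this forces $\chi(G)=n$. It therefore remains to prove the contrapositive of the converse: if $G$ is unbalanced then $\chi(G)\le n-1$ (note that this case only occurs for $n\ge3$, since $K_1$ and $K_2$ carry no circuits and are always balanced).

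The key structural input is that an unbalanced signed complete graph must contain a \emph{negative triangle}. This holds because the triangles of $K_n$ generate its cycle space, so if every triangle were positive, every circuit would be positive and $G$ would be balanced. I would fix such a negative triangle $T$; being a negative (hence antibalanced) circuit, $T$ can be coloured using only the two colours $+1$ and $-1$ by Proposition~\ref{prop:antibal}.

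The remaining $n-3$ vertices I would colour by grouping them according to absolute value, exploiting the observation that two vertices joined by a single edge can always share one absolute value $a\ge 1$: colour them $a,-a$ if the edge is positive and $a,a$ if it is negative, both of which are valid since $a\ne 0$. So I pair up the leftover vertices, assign each pair a fresh absolute value $2,3,\dots$, and, in the case where $n$ is even and one vertex remains, give that last vertex the colour $0$. A short count by parity shows that the colours used lie exactly in $M_{n-1}$: the largest absolute value needed is $(n-1)/2$ when $n$ is odd and $(n-2)/2$ (together with $0$) when $n$ is even. Thus the construction yields an $(n-1)$-colouring and finishes the argument.

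The main obstacle, and the point that makes the whole scheme work, is the reduction of the colouring condition to an \emph{absolute-value partition}: I must verify that edges joining vertices in different groups impose no constraint at all, because two vertices with distinct absolute values (or one coloured $0$) can be neither equal nor opposite. Once this is in place, the only conditions left to check are those inside the triangle $T$ and inside each pair, which hold by construction, so it suffices to cover $V(G)$ by one antibalanced piece of size three (the negative triangle) together with trivially colourable pieces of size at most two. Matching this count to the parity of $n$ is then routine.
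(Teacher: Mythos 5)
Your proof is correct and, at its core, it is the same construction as the paper's: an unbalanced (negative) triangle coloured from $\{+1,-1\}$, the remaining vertices grouped into pairs that share a fresh absolute value, and one vertex coloured $0$ to absorb the parity when $n$ is even, all glued together by the observation that colours of distinct absolute values never conflict across an edge of either sign. The difference is mainly presentational: the paper runs this as an induction on odd $n$ (peeling off a pair $\{x,y\}$, switching so that the edge $xy$ is negative, and colouring both ends with a new value $d$), then settles even $n$ by deleting a vertex that keeps the graph unbalanced and colouring it $0$; you unroll that induction into a one-shot construction, replace the switching step by a two-case rule ($a,-a$ on a positive edge, $a,a$ on a negative one), and state explicitly the cross-group non-interaction that the paper's induction uses only implicitly. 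Your version also justifies the existence of a negative triangle (triangles generate the cycle space of $K_n$, and the sign function is additive over symmetric differences), a fact the paper asserts without proof, and it avoids the paper's need to verify that the smaller graph in the inductive step remains unbalanced. Both arguments yield exactly the same colourings up to switching; yours is marginally more self-contained, the paper's is marginally shorter.
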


\begin{proof}
If $G$ is balanced, then clearly $\chi(G)=n$. In the rest of
the proof we therefore assume $G$ to be unbalanced and show
that $\chi(G)\le n-1$.

We first consider signed complete graphs of odd order and
proceed by induction. If $n=3$, then $G$ is switching
equivalent to a circuit of length $3$ with all edges negative.
Putting $1$ to all the vertices defines a $2$-colouring of $G$,
and the conclusion holds. Assume that $n\ge 5$ is odd, say
$n=2d+1$. Since $G$ is unbalanced, there is an unbalanced
triangle $T$ in $G$. Pick any two vertices $x$ and $y$ of $G$
that do not belong to~$T$, and switch the signature of $G$, if
necessary, to make the edge $xy$ negative. The graph
$G-\{x,y\}$ is an unbalanced complete graph on $n-2$ vertices,
so the induction hypothesis guarantees a colouring of
$G-\{x,y\}$ with $n-3$ colours from the set $\{\pm1,
\pm2,\ldots,\pm(d-1)\}$. It is now sufficient to assign $d$ to
both $x$ and $y$, producing a colouring of $G$ with $n-1$
colours from the set $\{\pm1, \pm2,\ldots,\pm d\}$. Thus
$\chi(G)\le n-1$.

For $n$ even, remove a suitable vertex $v$ from $G$ in such way
that $G-v$ is unbalanced; it is easy to see that such a vertex
always exists. By the previous part, we can colour $G-v$ with
$n-2$ colours from the set $\{\pm1, \pm2,\ldots,\pm (n-2)/2\}$.
Assigning $0$ to $v$ yields an $(n-1)$-colouring of $G$, so
$\chi(G)\le n-1$ again.
\end{proof}

We further need two lemmas. The first lemma is a standard tool
for colouring graphs greedily. The second lemma is due to
Lov\'asz and was crucial in his short proof of Brooks' theorem
\cite{Lovasz}. Its proof can also be found in Cranston and
Rabern \cite[Lemma~1]{Cranston}.

\begin{lemma} \label{lemma:order}
The vertices of every connected graph $G$ can be ordered in a
sequence $x_1,\penalty0 x_2,\ldots,x_n$ in such a way that
$x_n$ is any preassigned vertex of $G$ and for $i<n$ each
vertex $x_i$ has a neighbour among $x_{i+1}, x_{i+2}, \ldots,
x_n$.
\end{lemma}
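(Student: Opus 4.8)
The plan is to construct a search ordering of $G$ rooted at the preassigned vertex and then reverse it. Since $G$ is connected, it has a spanning tree $T$; root $T$ at the preassigned vertex, which I rename $r$.

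First I would produce an ordering $y_1,y_2,\ldots,y_n$ of $V(G)$ with $y_1=r$ that has the \emph{backward-neighbour property}: every $y_j$ with $j\ge 2$ is adjacent in $G$ to some $y_i$ with $i<j$. Such an ordering is easy to obtain, for instance by listing the vertices of $T$ in the order in which they are discovered by a breadth-first or depth-first search started at $r$. In either case the tree-parent of $y_j$ is a neighbour of $y_j$ in $G$ that was listed earlier, which secures the property. The single place where connectedness enters is precisely the existence of this ordering: whenever the set $S$ of already-listed vertices is a proper nonempty subset $S\subsetneq V(G)$, connectedness forces an edge leaving $S$, and its endpoint outside $S$ may be listed next.

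Finally I would reverse this sequence, setting $x_i:=y_{n+1-i}$ for $1\le i\le n$. Then $x_n=y_1=r$, so the preassigned vertex lands last, as demanded. For any $i<n$ the vertex $x_i=y_{n+1-i}$ has $y$-index $n+1-i\ge 2$, so by the backward-neighbour property it has a neighbour $y_\ell$ with $\ell<n+1-i$; in the reversed sequence that neighbour occupies position $n+1-\ell>i$, and therefore lies among $x_{i+1},x_{i+2},\ldots,x_n$. This is exactly the required forward-neighbour property.

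The argument is entirely routine and presents no genuine obstacle; the only point deserving attention is the observation that reversing a connected search ordering converts its backward-neighbour property into the desired forward-neighbour property, together with the remark that connectedness is precisely what guarantees that such a search ordering exists.
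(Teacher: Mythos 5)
Your proof is correct and complete: rooting a spanning tree at the preassigned vertex, taking a search (discovery) ordering, and reversing it is exactly the standard argument for this fact. The paper itself states this lemma without proof, calling it a standard tool for greedy colouring, so your argument supplies precisely the routine proof the authors implicitly rely on.
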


\begin{lemma} \label{lemma:2vertices}
Let $G$ be a $2$-connected graph with $\Delta(G)\ge 3$ other
than a complete graph. Then $G$ contains a pair of vertices $a$
and $b$ at distance $2$ such that the graph $G-\{a,b\}$ is
connected.
\end{lemma}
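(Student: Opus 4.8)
The plan is to prove the equivalent statement that $G$ contains two vertices at distance $2$ whose removal does not disconnect $G$; that is, a distance-$2$ pair which is not a separating set. I would split the argument according to the connectivity of $G$. If $G$ is $3$-connected the pair is found immediately: since $G$ is not complete it has two non-adjacent vertices, and the first and third vertices of a shortest path between them form a pair $a,b$ with $d(a,b)=2$; as $G$ is $3$-connected, $G-\{a,b\}$ remains connected. All the work therefore lies in the remaining case, in which $G$ is $2$-connected but possesses a $2$-cut.

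In that case I would begin by choosing $x$ to be a vertex of maximum degree, so that $\deg(x)=\Delta(G)\ge 3$. The first key step is to show that, since $G$ has a $2$-cut, the graph $G-x$ is connected (because $G$ is $2$-connected) but \emph{not} $2$-connected. Indeed, if $G-x$ were $2$-connected, then for any $2$-cut $\{p,q\}$ of $G$ one of two things would happen: either $x\in\{p,q\}$, in which case the other vertex of the cut would be a cut vertex of $G-x$; or $x\notin\{p,q\}$, in which case one forces $N(x)\subseteq\{p,q\}$. The first possibility contradicts the $2$-connectivity of $G-x$, and the second contradicts $\deg(x)\ge 3$. Hence the block--cut tree of $G-x$ has at least two end-blocks.

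The second key step is the construction of the pair. Let $B$ and $B'$ be two distinct end-blocks of $G-x$, with cut vertices $c$ and $c'$ (the unique vertices separating $B$, respectively $B'$, from the rest of $G-x$). Using the $2$-connectivity of $G$ I would show that $x$ has a neighbour in $B\setminus\{c\}$ and in $B'\setminus\{c'\}$: otherwise $c$ (respectively $c'$) alone would separate the interior of that end-block from the rest of $G$, since a non-cut vertex of a block sends all its edges into that block. Picking $a\in N(x)\cap(B\setminus\{c\})$ and $b\in N(x)\cap(B'\setminus\{c'\})$ then yields two vertices lying in different blocks of $G-x$, hence distinct and non-adjacent, that share the common neighbour $x$; thus $d(a,b)=2$.

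It remains to verify that $G-\{a,b\}$ is connected, and this is where the main obstacle sits. I would argue that the ``core'' obtained from $G-x$ by deleting the interiors of $B$ and $B'$ is connected and still contains $c$ and $c'$; that the leftover vertices $B\setminus\{c,a\}$ and $B'\setminus\{c',b\}$ stay attached to $c$ and $c'$ (a block remains connected after deleting a single vertex); and finally that $x$ reattaches to this structure through a surviving neighbour. The delicate point is this last one: deleting both $a$ and $b$ isolates $x$ precisely when $\deg(x)=2$ and $N(x)=\{a,b\}$ — exactly the situation excluded by having chosen $x$ with $\deg(x)=\Delta\ge 3$, which guarantees a third neighbour lying in the already-connected part. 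It is no accident that the hypothesis $\Delta(G)\ge 3$ (equivalently, that $G$ is not a cycle) is used twice: once to force $G-x$ to fail to be $2$-connected, and once to prevent $x$ from being isolated.
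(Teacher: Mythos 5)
Your Case~1 and the end-block construction in Case~2 are fine, but the first key step of Case~2 is false, and this is a genuine gap. From the existence of a $2$-cut in $G$ you cannot conclude that $G-x$ fails to be $2$-connected when $x$ is an (arbitrary) vertex of maximum degree. Concretely, let $G$ be $K_4$ on $\{a,b,c,d\}$ with the edge $cd$ subdivided by a new vertex $e$. Then $G$ is $2$-connected, not complete, $\Delta(G)=3$, and $\{c,d\}$ is a $2$-cut, so $G$ falls under your Case~2; yet for the maximum-degree vertex $x=a$ the graph $G-a$ is the $4$-cycle with edges $bc$, $ce$, $ed$, $db$, which is $2$-connected, so $G-x$ has no cut vertex and no pair of end-blocks to work with --- your construction never gets started. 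The flaw in your derivation is the second horn of the dichotomy: when $x\notin\{p,q\}$, the $2$-connectivity of $G-x$ does \emph{not} imply that $(G-x)-\{p,q\}$ is connected ($2$-connected graphs have plenty of $2$-cuts), so nothing forces $N(x)\subseteq\{p,q\}$. In the example above, $\{p,q\}=\{c,d\}$, $x=a$, and $N(a)=\{b,c,d\}$: neither horn of your dichotomy holds, yet there is no contradiction.

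The argument can be repaired in two ways, and the standard Lov\'asz-style proof (the one the paper cites from Cranston and Rabern) takes the first: keep $x$ of maximum degree, but split into cases according to whether $G-x$ is $2$-connected, rather than whether $G$ is $3$-connected. If $G-x$ is not $2$-connected, run your end-block argument verbatim (that part of your write-up is correct, including the use of $\deg(x)\ge 3$ to reattach $x$ at the end). If $G-x$ is $2$-connected, argue directly: if some vertex $b$ is at distance $2$ from $x$, take $a=x$ and note that $G-\{x,b\}=(G-x)-b$ is connected because $G-x$ is $2$-connected; if instead $x$ is adjacent to every other vertex, then since $G$ is not complete there are non-adjacent vertices $p,q\ne x$, these are at distance $2$ via $x$, and $G-\{p,q\}$ is connected because $x$ dominates it. (This case also subsumes your $3$-connected case, so no separate case analysis on the connectivity of $G$ is needed.) Alternatively, one can salvage your exact structure by choosing $x$ to be a vertex of degree at least $3$ lying in some $2$-cut; such a vertex always exists --- if every $2$-cut consisted of two degree-$2$ vertices, one can show inductively that all vertices have degree $2$, i.e.\ $G$ is a cycle, contradicting $\Delta(G)\ge 3$ --- but this existence claim requires its own proof, and your maximum-degree choice does not supply it.
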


Now we are ready to prove Theorem~\ref{thm:Brooks}.


\vspace{3mm}

\noindent\textbf{Proof of Theorem~\ref{thm:Brooks}.} If $G$ is
an unbalanced complete graph, then the conclusion follows from
Proposition~\ref{prop:complete}. The conclusion is also true
whenever $G$ is a path, a balanced even circuit, or an
unbalanced odd circuit. Thus we may assume that $G$ is a simple
connected signed graph of order $n$ with maximum degree
$\Delta\ge 3$ which is not complete. We distinguish two cases.

\medskip
Case 1. \textit{The graph $G$ is $2$-connected}. By
Lemma~\ref{lemma:2vertices}, $G$ contains a path $axb$ such
that $a$ is not adjacent to $b$ and $G-\{a,b\}$ is connected.
We switch the signature at $a$ and $b$ in such a way that the
edges $ax$ and $bx$ are both positive. Next, we choose an
ordering  $x_1,x_2,\ldots,x_{n-2}$ of the vertices of
$G-\{a,b\}$ as in Lemma~\ref{lemma:order} with $x_{n-2}=x$. We
now start colouring $G$ with colours from $M_{\Delta}$ by
assigning colour $1$ to both $a$ and~$b$. Then we colour the
vertices $x_1, x_2,\ldots,x_{n-3}$ greedily in the given order.
Each $x_i\ne x$ has a neighbour among its successors in
$G-\{a,b\}$, so $x_i$ has at most $\Delta-1$ neighbours
previously coloured. Each coloured neighbour $w$ forbids one
colour to $x_i$, so at least one colour from $M_{\Delta}$ is
still available for $x_i$, and we can proceed up to $x_{n-3}$.
At last we colour the vertex $x_{n-2}=x$. All the neighbours of
$x$ now have their colours, but since the colours of $a$ and
$b$ are the same, one colour from $M_\Delta$ is available for
$x$. Thus we can complete colouring $G$ with colours from
$M_\Delta$. This establishes Case~1.

\medskip
Case 2. \textit{The graph $G$ has a cut-vertex.} In this case
we proceed by contradiction and assume that $G$ is a signed
graph of minimum order that cannot be $\Delta$-coloured.  We
first derive the following property of~$G$.

\begin{itemize}
\item[(*)] \textit{Each cut-vertex $v$ of $G$ is incident
    with a bridge $vw$ such that $\deg(v)=\deg(w)=\Delta$.
    Furthermore, $G-vw$ is $\Delta$-colourable and in every
    $\Delta$-colouring of $G-vw$ both $v$ and $w$ are
    coloured $0$.}
\end{itemize}

To prove (*), take an arbitrary cut-vertex $v$ of $G$ and let
$V_1,V_2,\ldots, V_s$ be the vertex sets of the components of
$G-v$; let $G_i$ denote the subgraph induced by set $V_i\cup
\{v\}$. By the minimality of $G$, each $G_i$ admits a
$\Delta$-colouring $c_i$. If a certain subgraph $G_j$ has
$\deg_{G_j}(v)\le \Delta-2$, then we can clearly choose $c_j$
to have $c_j(v)\ne 0$. Note that any permutation of non-zero
colours from $M_\Delta$ transforms a proper $\Delta$-colouring
into another proper $\Delta$-colouring. Thus if we had
$\deg_{G_i}(v)\le \Delta-2$ for each $i\in\{1,2,\ldots, s\}$,
we could permute the colours within each $G_i$ in such a way
that $c_i(v)=1$ for every $i\in\{1,2,\ldots,s\}$. The
colourings $c_1, c_2, \ldots, c_s$ then could be easily
combined into a $\Delta$-colouring of the whole of $G$, which
would be a contradiction. Therefore there exists a subgraph
$G_j$ such that $\deg_{G_j}(v)> \Delta-2$. This is only
possible when $\deg_{G_j}(v)=\Delta-1$, $s=2$, and $v$ is
incident with a bridge $vw$. Let $G_v$ and $G_w$ be the
components of $G-vw$ containing $v$ and~$w$, respectively.
Clearly, each of $G_v$ and $G_w$ has a $\Delta$-colouring. If
one of the bridge-ends, say $w$, had a non-zero colour, we
could easily permute the colours within $G_w$ and then combine
the colourings of $G_v$ and $G_w$ into a $\Delta$-colouring of
the entire $G$. Thus for every $\Delta$-colouring $\phi$ of
$G_v$ and every $\Delta$-colouring $\psi$ of $G_v$ we have
$\phi(v)=\psi(w)=0$. This clearly forces
$\deg(v)=\deg(w)=\Delta$ (and $\Delta$ odd), and establishes
(*).

\medskip
To finish the proof, let us choose the cut-vertex $v$ of $G$
and the bridge $vw$ in such a way that $G_v$ is bridgeless.
Take any neighbour $u$ of $v$ in~$G_v$. If $u$ was a
cut-vertex, then by (*) it would be incident with a bridge of
$G$, which is impossible because $G_v$ was chosen bridgeless.
Therefore $G_v-u$ is connected. Lemma~\ref{lemma:order} now
ensures that the vertices of $G_v-u$ can be ordered into a
sequence $x_2,x_3,\ldots,x_m$ with $x_m=v$ in such a way that
for $2\le i<m$ each $x_i$ has a neighbour among its successors.
If we set $x_1=u$, the same becomes true for $1\le i<m$,
because $x_1$ is adjacent to $x_m$. We now assign $0$ to $x_1$
and colour the vertices greedily in the indicated order. The
result is a $\Delta$-colouring of $G_v$ under which $v$
receives a non-zero colour, contradicting (*). This
contradiction establishes the theorem. {\hfill$ \Box$}

\section{Chromatic number of signed planar graphs}
\label{sec:planar}

We conclude this article with a brief discussion of chromatic
numbers of signed planar graphs. Our main result here is a five
colour theorem for signed planar graphs.

\begin{theorem}
For every simple signed planar graph $G$ one has $\chi(G)\le
5$. Furthermore,
\begin{itemize}
\item[\rm (i)] if $G$ is triangle-free, then $\chi(G)\leq
    4$; and
\item[\rm (ii)] if $G$ is has girth at least $5$, then
    $\chi(G)\leq 3$.
\end{itemize}
\end{theorem}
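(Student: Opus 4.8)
The plan is to derive the three bounds from three different tools built in Section~\ref{sec:basic}, feeding into each regime the density information supplied by Euler's formula. For the overall bound $\chi(G)\le 5$ I would combine Proposition~\ref{prop:acyclic} with Borodin's theorem that every simple planar graph has acyclic chromatic number at most~$5$; since $\chi(G)\le\chi_a(\underline G)$, this settles the main assertion at once, and the two girth restrictions then serve only to lower the relevant unsigned invariant.

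For part~(i) I would argue by degeneracy. If $G$ is triangle-free then $\underline G$ has girth at least~$4$, so Euler's formula gives $|E(\underline G)|\le 2|V(\underline G)|-4$; the same inequality holds for every subgraph, each of which is again triangle-free and planar. Hence every subgraph has a vertex of degree at most~$3$, so $\underline G$ is $3$-degenerate, and Proposition~\ref{prop:k-degen} yields $\chi(G)\le 4$.

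Part~(ii) is the delicate one, because graphs of girth at least~$5$ need not be $2$-degenerate (the dodecahedron is $3$-regular of girth~$5$), so a direct greedy argument toward the bound $3$ fails. Instead I would exploit the special behaviour of the colour~$0$: a vertex coloured~$0$ forbids only the colour~$0$ to each of its neighbours, irrespective of the sign of the joining edge. This suggests reducing (ii) to the vertex-partition statement that \emph{every simple planar graph of girth at least~$5$ admits a partition $V=F\cup I$ in which $F$ induces a forest and $I$ is independent.} Granting such a partition, I would switch the signature so that the subgraph induced by $F$ becomes all-negative, assign the colour~$1$ to every vertex of $F$ in the switched graph, and assign~$0$ to every vertex of $I$. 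Undoing the switching turns the colours on $F$ into $\pm1$ while leaving those on $I$ equal to $0$; one then checks directly that every positive edge joins distinct colours and every negative edge joins non-opposite colours, so the result is a proper colouring with colours from $\{0,\pm1\}=M_3$, giving $\chi(G)\le 3$.

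The main obstacle is establishing this forest-plus-independent-set partition under the girth~$5$ hypothesis. I would prove it by discharging on a hypothetical minimal counterexample: assign each vertex the charge $\deg(v)-4$ and each face the charge $\ell(f)-4$, so that Euler's formula makes the total charge equal to $-8$, with every face of charge at least~$1$ thanks to $\ell(f)\ge 5$. After fixing a list of reducible configurations that a minimal counterexample cannot contain—low-degree vertices that can be absorbed into $I$, and short chains of degree-$2$ vertices that can be attached to $F$—I would redistribute charge from the positive faces to the deficient low-degree vertices and force every element to end non-negative, contradicting the negative total. The numerical slack $|E|\le\tfrac{5}{3}(|V|-2)$ coming from girth~$5$ is precisely what powers the discharging, and pinning down a self-contained family of reducible configurations is where the real difficulty lies; an alternative route would be to invoke a known decomposition theorem for sparse planar graphs and again appeal to Proposition~\ref{prop:acyclic}, but the partition argument keeps the dependence on external results minimal.
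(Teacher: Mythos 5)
Your treatment of the main bound and of part (i) coincides with the paper's proof: Borodin's acyclic $5$-colour theorem combined with Proposition~\ref{prop:acyclic} gives $\chi(G)\le 5$, and the Euler-formula degeneracy argument combined with Proposition~\ref{prop:k-degen} gives part (i). For part (ii) your colouring construction is also exactly the paper's: partition $V(G)$ into an independent set and a forest-inducing set, switch so the forest is all-negative, and colour with $0$ and $1$; this correctly produces a proper colouring with colours in $M_3$.

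The genuine gap is in how you justify the partition itself. The statement that every simple planar graph of girth at least $5$ splits into an independent set and a set inducing a forest is not something you can wave through with a discharging sketch: it is a nontrivial published theorem of Borodin and Glebov, which is precisely what the paper invokes at this point. Your sketch assigns charges $\deg(v)-4$ and $\ell(f)-4$ and notes the total is $-8$, but the entire content of such a proof lies in the list of reducible configurations and in verifying both their reducibility (that a minimal counterexample cannot contain them) and that the discharging rules make everything nonnegative in their absence; you explicitly concede that ``pinning down a self-contained family of reducible configurations is where the real difficulty lies.'' Reducibility here is delicate --- for instance, a vertex of low degree cannot simply be ``absorbed into $I$,'' because its neighbours may be forced into $I$ as well, and attaching degree-$2$ chains to $F$ can create cycles in $F$ --- so what remains unproved is essentially the whole Borodin--Glebov theorem. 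To close the gap, either cite that theorem (as the paper does, reference \cite{BorodinGlebov}) or actually carry out the full discharging proof, which is a substantial piece of work in its own right. Your fallback suggestion of instead applying Proposition~\ref{prop:acyclic} to a ``known decomposition theorem'' would not rescue part (ii) either: the acyclic chromatic number of planar graphs of girth $5$ is not known to be at most $3$, so that route cannot yield the bound $\chi(G)\le 3$.
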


\begin{proof}
Borodin~\cite{Borodin} proved that every planar graph is
acyclically $5$-colourable. If we combine this fact with
Proposition~\ref{prop:acyclic}, we obtain
$\chi(G)\le\chi_a(\underline{G})\le 5$, as claimed. Next, it is
an easy consequence of Euler's formula that every triangle-free
planar graph contains a vertex of degree at most~$3$; hence it
is 3-degenerate. The statement (i) now follows from
Proposition~\ref{prop:k-degen}. Finally, to prove the statement
(ii) we employ  a result of Borodin and Glebov
\cite{BorodinGlebov} that the vertex set of every planar graph
of girth at least $5$ has a partition $\{U,W\}$ where $U$ is an
independent set and $W$ induces a forest. We take the forest
$F$ induced by $W$ and switch the signature of $G$ to make $F$
all-negative. Now we can assign the vertices of $U$ colour $0$
and those of $W$ colour $1$, producing a proper $3$-colouring
of~$G$.
\end{proof}

The four colour theorem implies that the underlying graph of
every signed planar graph $G$ can be properly coloured with
four colours. It is natural to ask whether it is possible to
find a $4$-colouring of $G$ that respects the constraints of
the signature. To this end, we propose the following
conjecture:

\begin{conjecture}
Every simple signed planar graph $G$ has $\chi(G)\le 4$.
\end{conjecture}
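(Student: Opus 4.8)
The plan is to recast the statement as a vertex-partition problem and then attack it by discharging, in the spirit of the proof of the four colour theorem. First I would observe that a $4$-colouring uses colours from $M_4=\{\pm1,\pm2\}$, and that writing each colour as $\phi(v)=\varepsilon(v)\,a(v)$ with $a(v)=|\phi(v)|\in\{1,2\}$ and $\varepsilon(v)\in\{+1,-1\}$ splits the colouring into an \emph{absolute-value} map $a\colon V(G)\to\{1,2\}$ and a \emph{sign} map $\varepsilon$. An edge $uv$ whose endpoints receive different absolute values is automatically properly coloured, while on an edge with $a(u)=a(v)$ the constraint $\phi(u)\ne\sigma(uv)\phi(v)$ reduces exactly to the requirement that $\varepsilon$ restricted to that edge be a proper signed $2$-colouring. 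Hence $\chi(G)\le4$ holds if and only if the fibres $A_1=a^{-1}(1)$ and $A_2=a^{-1}(2)$ can be chosen so that each induced subgraph $G[A_i]$ admits a proper $2$-colouring; by Proposition~\ref{prop:antibal} this is equivalent to asking that $V(G)$ be partitioned into two parts, each inducing an \emph{antibalanced} signed subgraph. This is the precise signed analogue of the elementary fact that a graph is $4$-colourable exactly when its vertices split into two parts each inducing a bipartite graph, and it is the formulation I would work with.

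It is worth recording at once why the four colour theorem does \emph{not} settle the conjecture through this reformulation. The four colour theorem partitions $\underline{G}$ into two induced bipartite subgraphs, but an induced bipartite signed subgraph contains only even circuits, so it is antibalanced precisely when every such circuit is positive, that is, precisely when it is \emph{balanced}. The signature is under no obligation to render the two bipartite parts balanced, so the classical partition is in general useless here. Conversely, antibalanced parts enjoy a freedom denied to bipartite ones: they may contain odd circuits, provided those circuits are negative. The real task is therefore to exploit planarity \emph{together with} this extra slack in order to redistribute the signature appropriately between the two parts.

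The engine I would use is discharging applied to a hypothetical minimal counterexample. Via Euler's formula one controls the average local structure of a plane triangulation, and one would seek an unavoidable set of configurations each of which is \emph{reducible} for the property ``$V(G)$ partitions into two antibalanced parts'': if such a configuration appears, then a two-antibalanced partition of a suitably contracted or deleted smaller signed graph can be extended across it, contradicting minimality. Establishing the unavoidable set is essentially the planar bookkeeping familiar from the unsigned case; the difficulty is concentrated entirely in reducibility.

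The hard part will be verifying reducibility in the presence of a signature. Antibalance is a \emph{global} condition — it constrains the sign product around every circuit — so it can no longer be checked by the purely local Kempe-chain recolourings that drive the unsigned proof. Extending a partition across a configuration may force a cascade of switchings whose effect on distant circuits must be tracked, and the standard move of recolouring a single Kempe component can silently destroy antibalance somewhere far away. Capturing and taming this interaction between local recolouring and global balance is, in my view, the decisive and most dangerous step; it is the reason the statement resists the classical machinery. One should keep in mind that the same obstruction leaves open the genuine possibility that some sufficiently intricate planar signature defeats every extension, in which case the assertion would be refutable rather than provable, and the attack above would instead become a search for the structural bottleneck that such a counterexample must exploit.
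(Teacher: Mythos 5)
The statement you are addressing is not a theorem of the paper at all: it is stated there as an open \emph{conjecture}, with no proof given, so there is nothing in the paper to match your argument against. More importantly, your proposal is not a proof either. Its one solid contribution is the reformulation: $\chi(G)\le 4$ holds if and only if $V(G)$ can be partitioned into two sets each inducing an antibalanced signed subgraph. That equivalence is correct --- splitting a colouring $\phi$ with values in $M_4=\{\pm1,\pm2\}$ by absolute value, and noting that rescaling $\{\pm2\}$ to $\{\pm1\}$ preserves properness, reduces each part to a proper $2$-colouring, which by Proposition~\ref{prop:antibal} is exactly antibalance; the converse direction works because edges between the two parts are automatically satisfied. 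Your observation that the four colour theorem does not help (a bipartite signed subgraph is antibalanced precisely when it is balanced, a condition the signature need not grant) is also correct and worth making. But everything after that is a research programme, not an argument: no unavoidable set is exhibited, no configuration is shown reducible, and you yourself identify the interaction between local recolouring and the global antibalance condition as an unresolved obstruction. A discharging proof \emph{is} its reducibility lemmas; without a single one, there is no proof, only a description of where a proof would have to live.

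Your closing caveat --- that the statement might be refutable rather than provable --- deserves to be taken seriously rather than treated as a rhetorical hedge. The conjecture was in fact later disproved: Kardo\v{s} and Narboni constructed a simple signed planar graph with chromatic number $5$, so no amount of discharging can succeed, and the ``structural bottleneck'' you allude to in your final sentence is precisely what their counterexample exploits. The honest summary of your proposal is therefore: a correct and useful reformulation, followed by an attack on a statement that is false, with the fatal difficulty correctly located but not overcome.
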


\subsection*{Acknowledgements}
The work of the first and the third author was partially
supported from the grant VEGA 1/0474/15.

The authors would like to thank G\'abor Wiener for a
stimulating discussion on the topic of this paper and to Thomas
Zaslavsky for useful suggestions and encouragement to use the term
``the chromatic number of a signed graph'' for the main concept of this paper,
previously termed "the signed chromatic number".


\end{document}